\setlist[itemize]{leftmargin=25pt}
\setlist[enumerate]{leftmargin=25pt}
\newtheorem{theorem}{Theorem}[section]
\newtheorem{lemma}[theorem]{Lemma}
\newtheorem{prop}[theorem]{Proposition}
\newtheorem{cor}[theorem]{Corollary}
\theoremstyle{definition}
\newtheorem{definition}[theorem]{Definition}
\newtheorem{que}[theorem]{Question}
\theoremstyle{remark}
\newtheorem{remark}[theorem]{Remark}
\numberwithin{equation}{section}
\let \la=\lambda
\let \e=\varepsilon
\let \d=\delta
\let \a=\alpha
\begin{document}
\title[characterization of the $A_p$ condition]
{On a non-standard characterization of the $A_p$ condition}

\author[A.K. Lerner]{Andrei K. Lerner}
\address[A.K. Lerner]{Department of Mathematics,
Bar-Ilan University, 5290002 Ramat Gan, Israel}
\email{lernera@math.biu.ac.il}

\thanks{The author was supported by ISF grant no. 1035/21.}

\begin{abstract}
The classical Muckenhoupt's $A_p$ condition is necessary and sufficient for the boundedness of the maximal operator $M$ on
$L^p(w)$ spaces. In this paper we obtain another characterization of the $A_p$ condition. As a result, we show that some strong
versions of the weighted $L^p(w)$ Coifman--Fefferman and Fefferman--Stein inequalities hold if and only if $w\in A_p$. We also
give new examples of Banach function spaces $X$ such that $M$ is bounded on $X$ but not bounded on the associate space $X'$.
\end{abstract}

\keywords{$A_p$ weights, $C_p$ weights, maximal operator.}
\subjclass[2020]{42B20, 42B25, 46E30}

\maketitle

\section{Introduction}
Let $M$ be the Hardy--Littlewood maximal operator defined by
$$Mf(x):=\sup_{Q\ni x}\frac{1}{|Q|}\int_Q|f|,$$
where the supremum is taken over all cubes $Q\subset {\mathbb R}^n$ containing the point $x$.
By a weight we mean a non-negative locally integrable function on ${\mathbb R}^n$. A weight $w\in A_p, p>1,$ if
$$[w]_{A_p}:=\sup_Q\Big(\frac{1}{|Q|}\int_Qw\Big)\Big(\frac{1}{|Q|}\int_Qw^{-\frac{1}{p-1}}\Big)^{p-1}<\infty.$$
By the classical Muckenhoupt's theorem \cite{M72}, $M$ is bounded on $L^p(w)$ iff $w\in A_p$.

Denote by $N_p, p>1,$ the class of weights satisfying
$$\int_{{\mathbb R}^n}\frac{w(x)}{(1+|x|)^{np}}dx<\infty.$$
Since $Mf(x)\ge \frac{C}{(1+|x|)^n}$ whenever $f$ is not identically zero, we obtain that $A_p\subset N_p$.
It is obvious that this inclusion is strict (for example, any bounded weight belongs to $N_p$ while there is a variety of
bounded weights not belonging to $A_p$).

Now, denote by $W_p,p>1,$ the class of weights $w\in N_p$ for which there exists a constant $C>0$ such that
$$\int_{{\mathbb R}^n}\big(M(Mf)\big)^pw\le C\int_{{\mathbb R}^n}(Mf)^pw$$
whenever the right-hand side is finite. Then, we trivially have that $A_p\subseteq W_p$.
At first glance, it seems that this inclusion is also strict. However, this is still not clear to us.
We formulate this question as follows.

\begin{que}\label{Q}
Does there exist a weight $w\in W_p\setminus A_p$?
\end{que}

In this paper we obtain a result related to this question, and which is of some independent interest.
In order to state it, we need a definition of the $C_p$ class of weights.

We say that $w\in C_p, p>0,$ if there exist $C,\d>0$ such that for every cube $Q$ and any subset $E\subset Q$,
$$\int_Ew\le C\left(\frac{|E|}{|Q|}\right)^{\d}\int_{{\mathbb R}^n}(M\chi_Q)^pw.$$
This condition was introduced by Muckenhoupt \cite{M80}, and it plays an important role in the study of weighted Coifman--Fefferman
and Fefferman--Stein inequalities (see, e.g., \cite{CLRT21,S83,Y90}).

Denote $A_{\infty}:=\cup_{p>1}A_p$. It is well known that for any $p>1$, $A_p\subsetneq A_{\infty}\subsetneq C_p$, that is, the class $C_p$ is
much wider than $A_p$. In particular, a~$C_p$ weight may vanish on a set of positive measure while a weight from $A_p$ must be positive almost
everywhere (see, e.g., \cite{M80} or \cite{CLRT21}).

Since $A_p\subseteq W_p$, we have that $A_p\subseteq W_p\cap C_p$. Our main result says that in fact
the converse inclusion holds as well and we have the following non-standard characterization of the $A_p$ condition.

\begin{theorem}\label{mr} For any $p>1$,
$$A_p=W_p\cap C_p.$$
\end{theorem}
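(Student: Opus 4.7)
The inclusion $A_p \subseteq W_p \cap C_p$ is already recorded in the introduction, so the task is the reverse direction $W_p \cap C_p \subseteq A_p$. The plan is to fix $w \in W_p \cap C_p$, a cube $Q$, and a nonnegative $g$ supported on $Q$, and establish the Muckenhoupt-type testing bound
\[
\Big(\frac{1}{|Q|}\int_Q g\Big)^{\!p} w(Q) \le C \int_Q g^p w,
\]
which, by H\"older's inequality and the choice $g = w^{-1/(p-1)}\chi_Q$, is well known to be equivalent to $w \in A_p$.

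The opening move couples both hypotheses into a single estimate. Set $a := |Q|^{-1} \int_Q g$. Since $Mg \ge a\chi_Q$, applying $M$ once more yields $M(Mg) \ge a\, M\chi_Q$, so the $W_p$ inequality applied to $f = g$ gives
\[
a^p \int (M\chi_Q)^p w \le C \int (Mg)^p w.
\]
The $C_p$ hypothesis with $E = Q$ supplies the lower bound $w(Q) \le C \int (M\chi_Q)^p w$. Combining these produces the master inequality
\[
a^p w(Q) \le C \int_{\mathbb{R}^n} (Mg)^p w, \qquad (\star)
\]
which already has the shape of the $A_p$ bound, but with the unwanted factor $Mg$ in place of $g$ on the right.

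The remaining task is to replace $\int (Mg)^p w$ in $(\star)$ by $\int g^p w$. One cannot simply invoke boundedness of $M$ on $L^p(w)$, since that is exactly the $A_p$ conclusion we want to reach. My plan is a multi-scale iteration: Calder\'on--Zygmund decompose $g$ on $Q$ at level $a$ to produce disjoint subcubes $\{Q_j\} \subset Q$ with $a < g_{Q_j} \le 2^n a$ and $g \le a$ on $Q \setminus \bigcup_j Q_j$; dispose of the tail $\int_{(2Q)^c}(Mg)^p w$ via the pointwise bound $Mg(x) \lesssim a\, M\chi_Q(x)$ for $x \in (2Q)^c$; and recursively apply $(\star)$ on each $Q_j$, where the local average of $g$ has risen by a factor of at least $2^n$. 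The quantitative form $w(E) \le C(|E|/|Q|)^\delta \int (M\chi_Q)^p w$ from $C_p$ provides the geometric smallness of the weight across generations that is needed for the iterated bound to telescope.

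The principal obstacle lies in this final step: bookkeeping constants across generations of the Calder\'on--Zygmund tree so the resulting series converges to $C\int g^p w$ rather than diverging, and exploiting the strict positivity of the $C_p$ exponent $\delta$ to beat the combinatorial growth of the tree. The two hypotheses play genuinely complementary roles: $W_p$ supplies the master inequality $(\star)$, unavailable without it, while $C_p$ guarantees the cross-scale decay of the weight that is indispensable for the iteration to close — neither alone is enough, as witnessed by the strict containments $A_p \subsetneq A_\infty \subsetneq C_p$ and the presumed strict containment $A_p \subsetneq W_p$ flagged in Question~\ref{Q}.
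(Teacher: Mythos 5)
Your reduction to the testing inequality is fine, but the heart of the argument is missing, and the steps you do write down either say less than you think or cannot be carried out with the stated tools. First, your ``master inequality'' $(\star)$ does not really couple the two hypotheses: since $M\chi_Q\ge\chi_Q$ pointwise, the bound $w(Q)\le\int(M\chi_Q)^pw$ is trivial, so $(\star)$ follows from $W_p$ alone and the $C_p$ condition with $E=Q$ contributes nothing. Second, and more seriously, passing from $(\star)$ to the testing bound requires precisely an estimate of the form $\int(Mg)^pw\le C\int g^pw$ for $g$ supported in $Q$, i.e.\ the $A_p$ conclusion itself; your Calder\'on--Zygmund recursion is only a plan, and it is not clear it can close. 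Note that $(\star)$ and its local versions on the stopping cubes $Q_j$ are \emph{lower} bounds for quantities of the type $\int(Mg)^pw$, so ``recursively applying $(\star)$ on each $Q_j$'' does not produce the upper bound you need. The tail step also fails as stated: $\int_{(2Q)^c}(Mg)^pw\le Ca^p\int_{(2Q)^c}(M\chi_Q)^pw$ is only useful if one knows $\int(M\chi_Q)^pw\le Cw(cQ)$, and neither $C_p$ (which gives the reverse inequality) nor $W_p$ yields this directly --- that reverse bound is in fact a nontrivial output of the paper's argument. Similarly, the ``cross-scale decay'' you want from $C_p$ is measured against $\int(M\chi_{Q_j})^pw$, not against $w(Q_j)$, and the gap between these two quantities is exactly what separates $C_p$ from $A_\infty$; closing it is the whole difficulty.

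The paper's proof runs through a mechanism for which your sketch has no substitute: the $W_p$ hypothesis is first self-improved (boundedness of $M$ on $ML^p(w)$ implies boundedness of $M_r$ on $ML^p(w)$ for some $r>1$, by the Lerner--Ombrosi criterion), which via the pointwise splitting $Mf\le\frac{r}{r-1}\la^{\frac{r-1}{r}}M_rf+m_{\la}f$ and an absorption yields $\|Mf\|_{L^p(w)}\le2\|m_{\la_0}f\|_{L^p(w)}$ for some $\la_0\in(0,1)$ (Theorem \ref{eqst}, with Coifman--Rochberg handling the converse direction). Testing this inequality on characteristic functions of cubes gives the doubling property of $w$ and the bound $\|M\chi_Q\|_{L^p(w)}^p\le Cw(Q)$; only then does $C_p$ enter, upgrading these facts to $w\in A_\infty$, after which Proposition \ref{pr3} together with the same inequality gives the boundedness of $M$ on $L^p(w)$ and hence $w\in A_p$. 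To salvage your approach you would need to prove some version of $\int(M\chi_Q)^pw\le Cw(Q)$ and a workable $A_\infty$-type smallness inside the recursion, and at that point you would essentially be reproducing the paper's argument.
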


Returning to Question \ref{Q}, observe the following. As we will show below, in the proof of Theorem \ref{mr}, any weight $w$ in $W_p$ satisfies the doubling condition (which we denote by $w\in D$), namely, there exists a constant $C>0$ such that $\int_{2Q}w\le C\int_Qw$ for every cube $Q$. Therefore, if there exists a weight $w\in W_p\setminus A_p$, such a weight must necessarily belong to $D\setminus C_p$. But even a construction of weights in $D\setminus A_{\infty}$ is rather non-trivial, the first example of such a weight was given by Fefferman and Muckehnoupt~\cite{FM74}. This only says that an attempt to give a positive answer to Question \ref{Q} is
not a simple task.

In what follows, we consider several applications of Theorem~\ref{mr}, which will explain our interest in the class $W_p$.

\subsection{An application to the space $ML^p(w)$}
Let $X$ be a Banach function space (BFS) over ${\mathbb R}^n$, and let $X'$ denote the associate space.
We refer to a recent survey by Lorist and Nieraeth~\cite{LN24}
where, in particular, one can find a discussion about the choice of axioms needed to define the notion of BFS correctly.

Given a BFS $X$, denote by $MX$ the space equipped with norm $$\|f\|_{MX}:=\|Mf\|_{X}.$$
Observe that when $X:=L^p(w)$, the space $ML^p(w)$ was mentioned in Stein's book \cite[p. 222]{S93}.
For general BFS $X$, the space $MX$ was considered in a recent paper \cite{L20}. In particular, it was shown there that if $X$ is a BFS, then $MX$ is also a BFS
iff $\frac{1}{1+|x|^n}\in X$. It was also proved in \cite{L20} that the Fefferman--Stein inequality on $X$,
$$\|Mf\|_{X}\le C\|f^{\#}\|_{X},$$
holds iff $M$ is bounded on $(MX)'$. Here $f^{\#}$ is the Fefferman--Stein sharp function defined by
$$f^{\#}(x):=\sup_{Q\ni x}\frac{1}{|Q|}\int_Q|f-\langle f\rangle_Q|,\quad \langle f\rangle_Q:=\frac{1}{|Q|}\int_Qf.$$

Suppose now that $X:=L^p(w)$. Then $W_p$ is the class of weights for which $M$ is bounded on $ML^p(w)$. Next, it is well known (see \cite{Y90})
that the $C_p$ condition is necessary for the Fefferman--Stein inequality on $L^p(w)$ (or, equivalently, for the boundedness of $M$ on $(ML^p(w))'$)
and $C_{p+\e},\e>0,$ is sufficient. Thus, we obtain the following corollary from Theorem \ref{mr}.

\begin{cor}\label{fscor} Let $p>1$. If the maximal operator $M$ is bounded on $ML^p(w)$ and on $(ML^p(w))'$, then $w\in A_p$.
\end{cor}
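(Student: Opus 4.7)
The plan is to observe that the two hypotheses of the corollary are in fact precisely the two defining conditions appearing on the right-hand side of Theorem \ref{mr}, so the corollary reduces to bookkeeping. Specifically, I will verify that the boundedness of $M$ on $ML^p(w)$ gives $w\in W_p$ and that the boundedness of $M$ on $(ML^p(w))'$ gives $w\in C_p$; then Theorem \ref{mr} yields $w\in A_p$.

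For the first hypothesis, I would simply unfold the definition: by design $\|Mf\|_{ML^p(w)}=\|M(Mf)\|_{L^p(w)}$ and $\|f\|_{ML^p(w)}=\|Mf\|_{L^p(w)}$, so boundedness of $M$ on $ML^p(w)$ is exactly the inequality $\int (M(Mf))^p w \le C\int (Mf)^p w$ in the definition of $W_p$. The one point that needs to be addressed is that the definition of $W_p$ also requires $w\in N_p$; this, however, is forced by the assumption that $ML^p(w)$ is a BFS on which $M$ acts: by the criterion recalled from \cite{L20}, $MX$ is a BFS iff $\frac{1}{1+|x|^n}\in X$, and for $X=L^p(w)$ this is literally $w\in N_p$.

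For the second hypothesis, I would invoke the equivalence, also recorded in the paragraph preceding the corollary, between the boundedness of $M$ on $(MX)'$ and the Fefferman--Stein inequality $\|Mf\|_X\le C\|f^\#\|_X$ on $X$. Taking $X=L^p(w)$, the boundedness of $M$ on $(ML^p(w))'$ is thus equivalent to the weighted Fefferman--Stein inequality on $L^p(w)$. The necessity of the $C_p$ condition for this inequality, attributed in the introduction to Yabuta \cite{Y90}, then furnishes $w\in C_p$.

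Combining, $w\in W_p\cap C_p$, and Theorem \ref{mr} concludes $w\in A_p$. There is essentially no technical obstacle here: the corollary is a direct reinterpretation of Theorem \ref{mr} through the dictionary between the classes $W_p$, $C_p$ and the boundedness of $M$ on the spaces $ML^p(w)$, $(ML^p(w))'$. The only mild caveat is to be explicit that the BFS framework automatically produces the $N_p$ condition needed to legitimately place $w$ in the class $W_p$.
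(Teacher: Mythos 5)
Your proposal is correct and follows essentially the same route as the paper: translating the hypothesis on $ML^p(w)$ into $w\in W_p$, using the equivalence from \cite{L20} between boundedness of $M$ on $(ML^p(w))'$ and the Fefferman--Stein inequality together with Yabuta's necessity of $C_p$, and then applying Theorem \ref{mr}. Your extra remark that the $N_p$ condition is forced by $\frac{1}{1+|x|^n}\in L^p(w)$ is a sensible explicit bookkeeping step that the paper leaves implicit.
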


It is well known that for many ``standard" BFS $X$, the maximal operator $M$ is bounded on $X$ iff $M$ is bounded on $X'$,
and there are only few known non-trivial examples when this is not true (see a recent work by Nieraeth \cite{N24} for a
thorough discussion about this topic). Corollary~\ref{fscor} provides a variety of new examples.
Indeed, let $X:=(ML^p(w))'$. Take any weight $w\in C_{p+\e}\setminus A_p$. Then, as discussed above, $M$ is bounded on $X$ but, by Corollary \ref{fscor},
$M$ is not bounded on $X'=ML^p(w)$. Next, observe that if the answer to Question \ref{Q} is positive, we would obtain yet more examples. Indeed, taking
$w\in W_p\setminus A_p$, we would obtain that $M$ is bounded on $ML^p(w)$ and not bounded on $(ML^p(w))'$. Note that it was conjectured in \cite{N24} that
if $X$ is an r-convex and s-concave BFS, where $1<r<s<\infty$, then $M:X\to X$ iff $M:X'\to X'$. However, the above examples do not seem to satisfy the
convexity/concavity assumptions, and so they probably cannot be used to disprove this conjecture.

\subsection{On some variants of Coifman--Fefferman and Fefferman--Stein inequalities}
Suppose that $T$ is a singular convolution integral which is non-degenerate in the sense of Stein \cite[p. 210]{S93}. Then
$T$ is bounded on $L^p(w)$ iff $w\in A_p$. Now we have that the same result holds on $ML^p(w)$ as well.

\begin{theorem}\label{scf} Let $T$ be a non-degenerate singular integral operator. Then $T$ is bounded on $ML^p(w), p>1,$ iff $w\in A_p$.
\end{theorem}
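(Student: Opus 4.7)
The \emph{if} direction is immediate. For $w\in A_p$, both $M$ and $T$ are bounded on $L^p(w)$ by Muckenhoupt's theorem and the classical weighted Calder\'on--Zygmund theory, and using $|f|\le Mf$ pointwise we obtain
$$\|M(Tf)\|_{L^p(w)}\le C\|Tf\|_{L^p(w)}\le C\|f\|_{L^p(w)}\le C\|Mf\|_{L^p(w)},$$
which is the desired boundedness of $T$ on $ML^p(w)$.

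For the converse, the plan is to invoke Theorem~\ref{mr}: assuming $\|M(Tf)\|_{L^p(w)}\le C\|Mf\|_{L^p(w)}$, it suffices to derive both $w\in C_p$ and $w\in W_p$. Since $|Tf|\le M(Tf)$ pointwise, the hypothesis immediately yields the Coifman--Fefferman-type bound $\|Tf\|_{L^p(w)}\le C\|Mf\|_{L^p(w)}$; for a non-degenerate $T$ this forces $w\in C_p$ by the necessity results in \cite{M80,S83,Y90}.

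For $w\in W_p$, i.e., $\|M(Mf)\|_{L^p(w)}\le C\|Mf\|_{L^p(w)}$, my plan is to establish the pointwise comparison
$$M(Mf)(x)\le C\bigl(M(|Tf|)(x)+Mf(x)\bigr),\qquad f\ge 0,$$
using the non-degeneracy of $T$. For a cube $Q\ni x$ split $f=f_1+f_2$ with $f_1=f\chi_{4Q}$: a standard tail estimate gives $\langle Mf_2\rangle_Q\lesssim Mf(x)$, while the non-degeneracy of $T$ (providing the cone-type lower bound $|T\chi_E(y)|\gtrsim|E|/|y-x_E|^n$ on a cone outside $E$) should allow the local average $\langle Mf_1\rangle_Q$ to be controlled by $\langle|Tf_1|\rangle_{CQ}\le CM(|Tf|)(x)$. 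Granted this, and noting that $M(|Tf|)=M(Tf)$, the hypothesis gives
$$\|M(Mf)\|_{L^p(w)}\le C\bigl(\|M(Tf)\|_{L^p(w)}+\|Mf\|_{L^p(w)}\bigr)\le C'\|Mf\|_{L^p(w)},$$
so $w\in W_p$, and Theorem~\ref{mr} delivers $w\in A_p$.

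The \textbf{main obstacle} is proving the pointwise estimate $M(Mf)\le C(M(|Tf|)+Mf)$ for non-negative $f$. The non-degeneracy of $T$---a directional cone-type lower bound on its kernel---must be matched against the $L\log L\to L^1$ nature of $M$ on a cube, uniformly across cubes and across all non-degenerate $T$ in Stein's sense; this is the technical core of the argument.
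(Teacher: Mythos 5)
Your ``if'' direction is fine and matches the paper. The converse, however, rests on two steps that are genuine gaps, not technicalities. First, the deduction $w\in C_p$: the necessity of $C_p$ from the Coifman--Fefferman inequality $\|Tf\|_{L^p(w)}\le C\|Mf\|_{L^p(w)}$ is known from \cite{M80} only for the Hilbert transform ($n=1$), while Sawyer's result \cite{S83} requires the inequality for \emph{all} Riesz transforms $R_1,\dots,R_n$ simultaneously, and \cite{Y90} concerns the sharp-function inequality, not singular integrals. For a single operator that is merely non-degenerate in Stein's sense (e.g.\ one Riesz transform in $\mathbb{R}^n$), this necessity is not available in the literature you cite --- Remark \ref{rpv} of the paper flags exactly this obstruction as the reason the author does not argue this way.

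Second, and more seriously, the pointwise bound $M(Mf)\le C\bigl(M(|Tf|)+Mf\bigr)$ for $f\ge0$, which you yourself label the main obstacle, is precisely the unproven core and is genuinely deep. Localized, it asks for $\langle M(f\chi_{4Q})\rangle_Q\lesssim \langle |T(f\chi_{4Q})|\rangle_{CQ}+\inf_Q Mf$, i.e.\ a quantitative statement of the form $\|f\|_{L\log L(4Q)}\lesssim \|T(f\chi_{4Q})\|_{L^1(CQ)}+\|f\|_{L^1(CQ)}$ for non-negative $f$ (a local Zygmund--Stein converse). Stein's non-degeneracy only gives $\langle f\rangle_Q\le C|T(f\chi_Q)(y)|$ for $y$ in a cube translated along one fixed direction; this controls plain averages of $f$, hence $Mf$-type quantities, but says nothing about the $L\log L$ average that $\langle M(f\chi_{4Q})\rangle_Q$ represents. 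Such a converse is classical for the conjugate function/Hilbert transform, but for a kernel bounded below only along a single direction it is not known and certainly does not follow from the stated lower bound, so your route to $w\in W_p$ is not closed. The paper bypasses both issues by a different mechanism: it regularizes $w_{\varepsilon}=w+\varepsilon$ so that $ML^p(w_{\varepsilon})$ is order-continuous (Lemma \ref{choc}), applies Nieraeth's Theorem \ref{N} with $X=ML^p(w_{\varepsilon})$ to get $M$ bounded on $ML^p(w_{\varepsilon})$ and $(ML^p(w_{\varepsilon}))'$ with norms independent of $\varepsilon$, invokes Corollary \ref{fscor} (with Remarks \ref{ndep} and \ref{depc}) to get $[w_{\varepsilon}]_{A_p}$ uniformly bounded, and lets $\varepsilon\to0$. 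If you wish to keep your real-variable strategy, you would first have to establish the local $L\log L$ lower bound for an arbitrary non-degenerate $T$, which is an open problem-sized task rather than a routine splitting argument.
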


Observe that the sufficiency of $w\in A_p$ trivially follows from the classical theory. Indeed, if $w\in A_p$, then
$$\|M(Tf)\|_{L^p(w)}\le C\|f\|_{L^p(w)}\le C\|Mf\|_{L^p(w)}.$$ However, the necessity of $w\in A_p$ is not trivial even for the Hilbert transform.
Here we essentially use a recent result of Nieraeth \cite{N24} saying that if $X$ is an order-continuous BFS and $T$ is a non-degenerate linear operator bounded on $X$,
then $M$ is bounded on $X$ and $X'$ (an earlier version of this result with more restrictive convexity assumptions on $X$ was obtained by Rutsky \cite{R14}).
Thus, basically the necessity part in Theorem \ref{scf} follows by taking $X:=ML^p(w)$ and applying Corollary~\ref{fscor}. There are still some technicalities related to order-continuity of $ML^p(w)$,
which will be discussed in Section 3.

Theorem \ref{scf} can be viewed as a complement to the Coifman--Fefferman inequality \cite{CF74} saying that
$$\|T^*f\|_{L^p(w)}\le C\|Mf\|_{L^p(w)},$$
where $T^*$ stands for the maximal singular integral operator. A sufficient condition for this estimate to hold is $w\in C_{p+\e}, \e>0$ (see \cite{S83}).
Now, it is easy to show (this can be proved exactly as \cite[Lemma~3.2]{L20}) that for every $r\in (0,1)$ and for all $x\in {\mathbb R}^n$,
$$M_{r}(Tf)(x)\le C_{r,n}(T^*f(x)+Mf(x)),$$
where $M_rf:=M(|f|^r)^{1/r}$. Therefore, if $w\in C_{p+\e}$, then
$$\|M_r(Tf)\|_{L^p(w)}\le C_{r,n}\|Mf\|_{L^p(w)},\quad r\in (0,1).$$
It is natural to wonder what happens when $r=1$, and whether the condition $w\in C_{p+\e}$ remains to be sufficient. Theorem \ref{scf} shows that this is not the case,
and that the above inequality for $r=1$ holds iff $w\in A_p$.

As a simple corollary, we obtain that a similar phenomenon occurs with the Fefferman--Stein inequality~\cite{FS72} saying that
\begin{equation}\label{FS}
\|Mf\|_{L^p(w)}\le C\|f^{\#}\|_{L^p(w)}.
\end{equation}
By \cite{Y90}, the $C_{p+\e}$ condition is sufficient for (\ref{FS}). Suppose now that we change $f^{\#}$ in (\ref{FS}) by a smaller operator
$f_{\d}^{\#}:=\big((|f|^{\d})^{\#}\big)^{1/\d}$ for $\d\in (0,1)$. Then using that
$(Tf)_{\d}^{\#}\le C_{\d,n}Mf$ for a singular integral operator $T$ (see, e.g., \cite{AP94}), we obtain that $T$ is bounded on $ML^p(w)$, and hence $w\in A_p$.
Thus, we have the following.

\begin{cor}\label{fs} Suppose that $p>1$ and $\d\in (0,1)$. Then the estimate
\begin{equation}\label{vfs}
\|Mf\|_{L^p(w)}\le C\|f_{\d}^{\#}\|_{L^p(w)}
\end{equation}
holds iff $w\in A_p$.
\end{cor}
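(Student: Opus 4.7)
\textbf{Proof plan for Corollary~\ref{fs}.}

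The plan is to prove the two implications separately; the nontrivial ``only if'' direction reduces directly to Theorem~\ref{scf}.

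\emph{Sufficiency.} If $w\in A_p$ then $w\in A_\infty$, and a classical good-$\lambda$ inequality of \'{A}lvarez--P\'{e}rez~\cite{AP94} comparing $M$ and $f_\delta^\#$ gives
$$\|Mf\|_{L^p(w)}\le C\|f_\delta^\#\|_{L^p(w)}$$
whenever $\|Mf\|_{L^p(w)}<\infty$. This is exactly~(\ref{vfs}). I emphasize that this does \emph{not} fall out of applying the standard Fefferman--Stein inequality (\ref{FS}) to $|f|^\delta$ at exponent $p/\delta$: that substitution controls only $M_\delta f=M(|f|^\delta)^{1/\delta}$, which is already dominated by $Mf$ when $\delta<1$.

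\emph{Necessity.} Following the hint preceding the corollary, I fix a non-degenerate singular convolution operator $T$ (e.g., a Riesz transform) and apply~(\ref{vfs}) to $Tf$ in place of $f$. Combining this with the pointwise bound $(Tf)_\delta^\#(x)\le C_{\delta,n}Mf(x)$ for $\delta\in(0,1)$, which is obtained from the weak $(1,1)$ boundedness of $T$ via Kolmogorov's inequality together with the usual cancellation estimates for the far part (see~\cite{AP94}), gives
$$\|M(Tf)\|_{L^p(w)}\le C\|(Tf)_\delta^\#\|_{L^p(w)}\le C_{\delta,n}\|Mf\|_{L^p(w)},$$
so $T$ is bounded on $ML^p(w)$. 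Theorem~\ref{scf} now forces $w\in A_p$.

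\emph{Main obstacle.} No new ideas are needed beyond Theorem~\ref{scf} and the two classical ingredients just quoted. The only delicate point is the a~priori finiteness of the $L^p(w)$-norms involved (on both sides of~(\ref{vfs}) and in the derived boundedness of $T$ on $ML^p(w)$). I would handle this by restricting first to a dense class of $f$ (bounded and compactly supported) for which the relevant norms are trivially finite, and then extending by approximation, paralleling the order-continuity discussion that Section~3 promises in connection with Theorem~\ref{scf}.
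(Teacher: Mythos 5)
Your necessity argument is exactly the one the paper uses: apply \eqref{vfs} to $Tf$, invoke the pointwise bound $(Tf)^{\#}_{\d}\le C_{\d,n}Mf$ from \cite{AP94} to get that $T$ is bounded on $ML^p(w)$, and conclude via Theorem \ref{scf}. (The paper also records a shortcut that avoids Theorem \ref{scf} altogether: \eqref{vfs} implies \eqref{FS}, hence $w\in C_p$ by \cite{Y90}, while $f^{\#}_{\d}\le 2M_{\d}f$ and the Coifman--Rochberg bound $M_{\d}(Mf)\le CMf$ give boundedness of $M$ on $ML^p(w)$, so Theorem \ref{mr} applies; but your route is the one stated before the corollary.)

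The sufficiency part of your proposal contains a genuine error. There is no ``classical good-$\lambda$ inequality of \'Alvarez--P\'erez comparing $M$ and $f^{\#}_{\d}$'' valid for all $w\in A_\infty$: what \cite{AP94} (and the standard good-$\lambda$ argument) gives is $\|M_{\d}f\|_{L^p(w)}\le C\|f^{\#}_{\d}\|_{L^p(w)}$ for $w\in A_\infty$, i.e.\ with $M_{\d}$, not $M$, on the left. The stronger statement you assert, namely $\|Mf\|_{L^p(w)}\le C\|f^{\#}_{\d}\|_{L^p(w)}$ for every $A_\infty$ weight, is false, and indeed it contradicts the very corollary you are proving (and your own necessity argument): take any $w\in A_\infty\setminus A_p$ lying in $N_p$, e.g.\ $w=\min(|x|^{\a},1)$ in $\R^1$ with $n(p-1)<\a$. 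Since $w\notin A_p$, Muckenhoupt's theorem \cite{M72} provides bounded, compactly supported $f_k$ with $\|Mf_k\|_{L^p(w)}\ge k\|f_k\|_{L^p(w)}$ and $\|Mf_k\|_{L^p(w)}<\infty$, while for $\d$ small enough $w\in A_{p/\d}$, so $\|f_{k,\d}^{\#}\|_{L^p(w)}\le 2\|M_{\d}f_k\|_{L^p(w)}\le C\|f_k\|_{L^p(w)}$; thus \eqref{vfs} fails for such $w$ even with the a priori finiteness proviso. The correct sufficiency proof does not avoid the substitution you dismiss; it combines it with the full strength of $A_p$: for $w\in A_p$ one has $\|Mf\|_{L^p(w)}\le C\|f\|_{L^p(w)}$ by Muckenhoupt's theorem, then $|f|\le M_{\d}f$ almost everywhere, and finally $\|M_{\d}f\|_{L^p(w)}\le C\|f_{\d}^{\#}\|_{L^p(w)}$ by the Fefferman--Stein inequality applied to $|f|^{\d}$ at exponent $p/\d$ with the $A_\infty$ weight $w$ (with the usual a priori finiteness handled, as you suggest, on bounded compactly supported $f$ and a limiting argument). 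So the $A_p$ hypothesis enters essentially in the first step; $A_\infty$ alone cannot suffice, which is precisely the point of the corollary.
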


Observe that there is a simpler way to prove Corollary \ref{fs}, without the use of Theorem \ref{scf}. Indeed, (\ref{vfs}) implies (\ref{FS}) and hence, by \cite{Y90}, $w\in C_p$.
Next, using that $f_{\d}^{\#}\le 2M_{\d}f$ and that, by the Coifman--Rochberg theorem \cite{CR80}, $M_{\d}(Mf)\le CMf$, we obtain from (\ref{vfs}) that $M$ is bounded on $ML^p(w)$,
and it remains to apply Theorem \ref{mr}.

\vskip 1mm
The paper is organized as follows. In Section 2 we prove Theorem~\ref{mr}, and in Section 3 we prove Theorem \ref{scf}.

\section{Proof of Theorem \ref{mr}}
An important role in our proof will be played by the local maximal operator defined by
$$m_{\la}f(x):=\sup_{Q\ni x}(f\chi_Q)^*(\la|Q|),\quad \la\in (0,1),$$
where the supremum is taken over all cubes $Q\subset {\mathbb R}^n$ containing the point $x$,
and $(f\chi_Q)^*(\la|Q|)$ denotes the non-increasing rearrangement defined by
$$(f\chi_Q)^*(\la|Q|):=\inf\{\a>0:|\{x\in Q:|f(x)|>\a\}|\le \la|Q|\}.$$
We mention several simple propositions which will be used below.

\begin{prop}\label{pr1} For any $r>1$ and $\la\in (0,1)$, and for all $x\in {\mathbb R}^n$,
$$Mf(x)\le \frac{r}{r-1}\la^{\frac{r-1}{r}}M_rf(x)+m_{\la}f(x).$$
\end{prop}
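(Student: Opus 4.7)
The plan is to fix a cube $Q\ni x$, introduce the threshold $\alpha := (f\chi_Q)^*(\la|Q|)$ built directly from the rearrangement on $Q$, and split the mean $\frac{1}{|Q|}\int_Q|f|$ at this level. Two facts follow immediately from the definitions: (i) $\alpha\le m_\la f(x)$ by the very definition of $m_\la$; and (ii) the super-level set $E:=\{y\in Q:|f(y)|>\alpha\}$ satisfies $|E|\le\la|Q|$ by the defining property of the non-increasing rearrangement (right-continuity of the distribution function in the threshold).

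Writing
$$\frac{1}{|Q|}\int_Q|f|\le \frac{1}{|Q|}\int_{Q\setminus E}|f|+\frac{1}{|Q|}\int_E|f|\le \alpha+\frac{1}{|Q|}\int_E|f|,$$
the first summand is controlled by $m_\la f(x)$ thanks to (i). For the second, a one-line application of Hölder's inequality with exponents $r$ and $r/(r-1)$, combined with (ii), gives
$$\frac{1}{|Q|}\int_E|f|\le \left(\frac{1}{|Q|}\int_Q|f|^r\right)^{1/r}\left(\frac{|E|}{|Q|}\right)^{(r-1)/r}\le \la^{(r-1)/r}M_rf(x),$$
which already yields the desired inequality after taking the supremum over $Q\ni x$, and in fact with the sharper constant $1$ in place of $\frac{r}{r-1}$. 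To recover the exact stated constant, I would instead use the layer-cake identity
$$\frac{1}{|Q|}\int_E|f| = \alpha\frac{|E|}{|Q|}+\frac{1}{|Q|}\int_\alpha^\infty|\{y\in Q:|f(y)|>t\}|\,dt,$$
and bound the tail by interpolating between the uniform estimate $|\{|f|>t\}\cap Q|\le\la|Q|$ (valid for $t\ge\alpha$) and Chebyshev's bound $|\{|f|>t\}\cap Q|\le t^{-r}\int_Q|f|^r$, splitting at the optimal cutoff $T=\la^{-1/r}\bigl(|Q|^{-1}\int_Q|f|^r\bigr)^{1/r}$; a short calculation then produces exactly the factor $\frac{r}{r-1}\la^{(r-1)/r}$.

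There is no real obstacle here: once the threshold $\alpha$ is chosen via the rearrangement, the estimate is essentially forced. The hypothesis $r>1$ is used exactly once, to enable the Hölder (or Chebyshev) step on $E$, and the two observations (i), (ii) are precisely what the definitions of $m_\la f$ and $(f\chi_Q)^*(\la|Q|)$ are designed to supply.
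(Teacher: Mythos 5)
Your proof is correct, and it reaches the paper's conclusion by a slightly different (and in fact sharper) route. The paper works entirely on the rearrangement side: it writes $\frac{1}{|Q|}\int_Q|f|=\int_0^1(f\chi_Q)^*(\tau|Q|)\,d\tau$, splits at $\tau=\lambda$, bounds the piece over $(\lambda,1)$ by $(f\chi_Q)^*(\lambda|Q|)\le m_\lambda f(x)$, and controls the piece over $(0,\lambda)$ by integrating the Chebyshev bound $(f\chi_Q)^*(\tau|Q|)\le\tau^{-1/r}\bigl(\frac{1}{|Q|}\int_Q|f|^r\bigr)^{1/r}$, which is exactly where $\int_0^\lambda\tau^{-1/r}\,d\tau=\frac{r}{r-1}\lambda^{(r-1)/r}$ comes from. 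You perform the same split but on the physical side: cutting $Q$ at the level $\alpha=(f\chi_Q)^*(\lambda|Q|)$ into $Q\setminus E$, where $|f|\le\alpha\le m_\lambda f(x)$, and the super-level set $E$ with $|E|\le\lambda|Q|$ (your appeal to right-continuity of the distribution function is precisely what makes the infimum in the definition of the rearrangement attained, so this step is sound), and then estimating the contribution of $E$ by H\"older rather than by integrating Chebyshev. This buys a better constant: your first computation gives $Mf(x)\le\lambda^{(r-1)/r}M_rf(x)+m_\lambda f(x)$, which implies the stated inequality since $\frac{r}{r-1}>1$; the layer-cake refinement you sketch (splitting the tail at $T=\lambda^{-1/r}\bigl(|Q|^{-1}\int_Q|f|^r\bigr)^{1/r}$, noting $\alpha\le T$ by Chebyshev) does reproduce the paper's constant exactly, but it is unnecessary, since proving a stronger bound suffices. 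Both arguments use $r>1$ in the same single place: to make the exponent-$r$ step work (the H\"older conjugate exponent for you, the convergence of $\int_0^\lambda\tau^{-1/r}\,d\tau$ for the paper).
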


\begin{proof} By Chebyshev's inequality, for $\tau\in (0,1)$ and for every cube $Q$,
\begin{equation}\label{ch}
(f\chi_Q)^*(\tau|Q|)\le \frac{1}{\tau^{1/r}}\Big(\frac{1}{|Q|}\int_Q|f|^r\Big)^{1/r}.
\end{equation}
Therefore, for $x\in Q$,
\begin{eqnarray*}
\frac{1}{|Q|}\int_Q|f|&=&\frac{1}{|Q|}\int_0^{|Q|}(f\chi_Q)^*(t)dt=\int_0^1(f\chi_Q)^*(\tau|Q|)d\tau\\
&\le& \Big(\int_0^{\la}\frac{1}{\tau^{1/r}}d\tau\Big)\Big(\frac{1}{|Q|}\int_Q|f|^r\Big)^{1/r}+\int_{\la}^1(f\chi_Q)^*(\tau|Q|)d\tau\\
&\le& \frac{r}{r-1}\la^{\frac{r-1}{r}}M_rf(x)+m_{\la}f(x),
\end{eqnarray*}
and the result follows.
\end{proof}

\begin{prop}\label{pr2}  For any $\la\in (0,1)$ and for all $x\in {\mathbb R}^n$,
$$m_{\la}(Mf)(x)\le C_{\la,n}Mf(x).$$
\end{prop}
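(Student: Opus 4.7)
The plan is to fix $x\in\mathbb{R}^n$ and an arbitrary cube $Q\ni x$, and bound $(Mf\cdot\chi_Q)^*(\lambda|Q|)$ by a constant multiple of $Mf(x)$ by the standard local/non-local decomposition. Write $f=f_1+f_2$, where $f_1:=f\chi_{3Q}$ and $f_2:=f\chi_{(3Q)^c}$, so that $Mf\le Mf_1+Mf_2$.

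For the non-local piece, I would argue that any cube $R$ containing some $y\in Q$ with $R\cap(3Q)^c\neq\emptyset$ must satisfy $\ell(R)\ge\ell(Q)$, hence $R\subset cQ_x$ for some cube $Q_x\ni x$ with $|Q_x|\le C_n|R|$. This yields a pointwise bound $Mf_2(y)\le C_n Mf(x)$ for every $y\in Q$, and in particular $\|Mf_2\chi_Q\|_\infty\le C_n Mf(x)$.

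For the local piece, the weak-type $(1,1)$ inequality for $M$ gives
$$|\{y\in Q:Mf_1(y)>\alpha\}|\le \frac{C_n}{\alpha}\int_{3Q}|f|\le \frac{3^n C_n}{\alpha}|Q|Mf(x).$$
Choosing $\alpha=3^nC_n\lambda^{-1}Mf(x)$ makes the right-hand side $\le\lambda|Q|$, which shows
$$(Mf_1\chi_Q)^*(\lambda|Q|)\le \frac{3^nC_n}{\lambda}Mf(x).$$

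Finally, I would combine the two estimates using the elementary rearrangement bound $(F+G)^*(t)\le F^*(t)+\|G\|_\infty$ applied to $F=Mf_1\chi_Q$ and $G=Mf_2\chi_Q$, obtaining
$$(Mf\cdot\chi_Q)^*(\lambda|Q|)\le \Big(\frac{3^nC_n}{\lambda}+C_n\Big)Mf(x)=:C_{\lambda,n}Mf(x).$$
Taking the supremum over all cubes $Q\ni x$ gives the claim. There is no real obstacle here; the only care needed is in the non-local geometric step where one checks that cubes reaching outside $3Q$ while meeting $Q$ have size comparable to a cube centred at $x$, so that the averages they produce are controlled by $Mf(x)$.
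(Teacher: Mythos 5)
Your argument is correct, but it takes a different route from the paper. The paper deduces the proposition in two lines from facts already at hand: the Chebyshev-type bound $m_{\lambda}f\le \lambda^{-1/\delta}M_{\delta}f$ (inequality (2.2) in the text) combined with the Coifman--Rochberg theorem, which gives $M_{\delta}(Mf)\le C_{\delta,n}Mf$ for $\delta\in(0,1)$. You instead give a direct, self-contained proof: the $3Q$ local/non-local splitting, the pointwise bound $Mf_2(y)\le C_nMf(x)$ on $Q$ for the far part, the weak $(1,1)$ inequality for the near part, and the elementary rearrangement estimate $(F+G)^*(t)\le F^*(t)+\|G\|_{\infty}$. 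Each step checks out: cubes $R$ meeting both $Q$ and $(3Q)^c$ satisfy $\ell(R)\ge \ell(Q)$, so $x\in 3R$ and the average of $|f_2|$ over $R$ is at most $3^nMf(x)$ (your phrasing ``$R\subset cQ_x$'' is a bit garbled, but this is what is meant), and your choice of $\alpha$ is legitimate once you dispose of the trivial cases $Mf(x)=0$ or $Mf(x)=\infty$. What each approach buys: the paper's proof is shorter and reuses (2.2), at the cost of invoking Coifman--Rochberg as a black box and getting a constant of the form $\lambda^{-1/\delta}$; your proof is more elementary, needs only the weak $(1,1)$ bound, and yields the better constant $C_{\lambda,n}\sim \lambda^{-1}$, though the quality of the constant is irrelevant for the way the proposition is used later.
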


\begin{proof} By (\ref{ch}), for every $\d>0$,
\begin{equation}\label{mla}
m_{\la}f(x)\le \frac{1}{\la^{1/\d}}M_{\d}f(x).
\end{equation}
Next, by the Coifman--Rochberg theorem \cite{CR80}, $M_{\d}(Mf)\le C_{\d,n}Mf$ for $\d\in (0,1)$.
Combining both estimates proves the result.
\end{proof}

The following result is the key ingredient of the proof, and it can be formulated for general BFS $X$.

\begin{theorem}\label{eqst} Let $X$ be a BFS such that $\frac{1}{1+|x|^n}\in X$. The following statements are equivalent:
\begin{enumerate}[(i)]
\item
$M$ is bounded on $MX$, that is, there exists a constant $C>0$ such that for every $f\in MX$,
$$\|MMf\|_{X}\le C\|Mf\|_{X};$$
\item there exists $\la_0\in (0,1)$ such that for every $f\in X$,
\begin{equation}\label{mx}
\|Mf\|_{X}\le 2\|m_{\la_0}f\|_{X}.
\end{equation}
\end{enumerate}
\end{theorem}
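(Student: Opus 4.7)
The two directions will be handled separately. For the easy direction $(ii)\Rightarrow(i)$, the plan is to apply \eqref{mx} with $f$ replaced by $Mf$ to obtain $\|MMf\|_X\le 2\|m_{\lambda_0}(Mf)\|_X$, and then invoke Proposition~\ref{pr2} to bound $m_{\lambda_0}(Mf)\le C_{\lambda_0,n}Mf$ pointwise; this gives (i) with constant $C=2C_{\lambda_0,n}$.

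The substantive direction is $(i)\Rightarrow(ii)$. Starting from Proposition~\ref{pr1}, for any $r>1$ and $\lambda\in(0,1)$, taking the $X$-norm gives
$$\|Mf\|_X\le\tfrac{r}{r-1}\lambda^{(r-1)/r}\|M_rf\|_X+\|m_\lambda f\|_X.$$
The strategy is to establish, under hypothesis (i), a complementary estimate $\|M_rf\|_X\le K\|Mf\|_X$ for some $r>1$ and $K=K(C_0,n)$ depending only on the boundedness constant $C_0$ in (i). Once this is in place, choosing $\lambda$ so that $\tfrac{r}{r-1}\lambda^{(r-1)/r}K\le\tfrac12$ and absorbing the first term (for $f$ with $\|Mf\|_X<\infty$, which is the only case of interest) yields \eqref{mx} with constant $2$.

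For the complementary estimate, hypothesis (i) is precisely the statement that $M$ is bounded on the BFS $MX$, which is itself a BFS thanks to the assumption $\tfrac{1}{1+|x|^n}\in X$. The plan is to invoke the BFS analogue of the self-improvement property $A_p\subset A_{p-\varepsilon}$, available from the theory of the maximal operator on Banach function spaces (see, e.g., the Lorist--Nieraeth survey \cite{LN24}), to produce $r>1$ and $K$ such that $M_r$ is bounded on $MX$; that is, $\|M(M_rg)\|_X\le K\|Mg\|_X$ for every $g\in MX$. Specializing to $g=f$ and combining with the a.e.\ pointwise bound $M_rf\le M(M_rf)$ (valid since $M_rf$ is locally integrable) then gives $\|M_rf\|_X\le\|M(M_rf)\|_X\le K\|Mf\|_X$, which is the desired complementary estimate.

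The main obstacle I anticipate is justifying this self-improvement step on the BFS $MX$. In the concrete case $X=L^p(w)$, this amounts to showing that $w\in W_p$ forces $M_r\colon ML^p(w)\to ML^p(w)$ for some $r>1$, which can be proved via weak-type arguments and Calder\'on--Zygmund techniques. In the abstract BFS setting, rather than appealing to an abstract self-improvement theorem, a cleaner route may be to argue by hand: apply Proposition~\ref{pr1} to $Mf$ and combine it with the Coifman--Rochberg $A_1$-property of $(Mf)^{\delta}$ for $\delta\in(0,1)$ and a reverse-H\"older-type bootstrap powered by (i). Either way, once this step is established, the absorption argument outlined above completes the proof.
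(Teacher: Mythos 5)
Your overall route coincides with the paper's: Proposition~\ref{pr2} for $(\mathrm{ii})\Rightarrow(\mathrm{i})$, and for $(\mathrm{i})\Rightarrow(\mathrm{ii})$ Proposition~\ref{pr1} plus a bound $\|M_rf\|_X\le K\|Mf\|_X$ obtained from self-improvement of the boundedness of $M$ on the BFS $MX$, followed by absorption. The step you flag as the ``main obstacle'' is not actually an obstacle: that boundedness of $M$ on a Banach function space forces boundedness of $M_r$ on the same space for some $r>1$ is a known theorem of Lerner and Ombrosi \cite{LO10}, and this is precisely what the paper invokes, applied to $MX$ (which is a BFS because $\frac{1}{1+|x|^n}\in X$); your alternative ``by hand'' reverse-H\"older bootstrap is unnecessary and, as sketched, too vague to serve as a proof.

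The genuine gap is at the end of the absorption argument. Absorbing the term $K\frac{r}{r-1}\lambda^{(r-1)/r}\|Mf\|_X$ into the left-hand side is legitimate only when $\|Mf\|_X<\infty$, i.e.\ for $f\in MX$, whereas statement (ii) asserts \eqref{mx} for \emph{every} $f\in X$; your parenthetical ``which is the only case of interest'' is not justified by the statement being proved. The paper closes this gap by a limiting argument: for $f\in X$ set $f_N:=\min(|f|,N)\chi_{\{|x|\le N\}}$, note that $f_N\in MX$, apply \eqref{mx} to $f_N$ together with the monotonicity $m_{\lambda_0}(f_N)\le m_{\lambda_0}f$, and then use that $M(f_N)\uparrow Mf$ and the Fatou property of $X$ to pass to the limit $N\to\infty$. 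You need to add this step (or an equivalent approximation argument); without it the implication $(\mathrm{i})\Rightarrow(\mathrm{ii})$ is only established on the subclass $MX\cap X$ rather than on $X$.
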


\begin{proof}
Let us prove the implication $({\rm i})\Rightarrow ({\rm ii})$. We will use the well known fact (see, e.g., \cite{LO10}) that if
$M$ is bounded on a BFS $X$, then $M_r$ is bounded on $X$ for some $r>1$. Therefore, the boundedness of $M$ on $MX$
implies that there exist $r>1$ and $C>0$ such that
$$\|M_rf\|_{X}\le C\|Mf\|_{X}.$$
Combining this with Proposition \ref{pr1} yields
$$\|Mf\|_X\le C\frac{r}{r-1}\la^{\frac{r-1}{r}}\|Mf\|_{X}+\|m_{\la}f\|_{X}.$$
Therefore, taking $\la$ such that $C\frac{r}{r-1}\la^{\frac{r-1}{r}}=\frac{1}{2}$ proves (\ref{mx}) under the assumption that $f\in MX$.

Take now an arbitrary $f\in X$. For $N>0$ set
$$f_N:=\min(|f|, N)\chi_{\{|x|\le N\}}.$$
Then $f_N\in MX$. Hence, by (\ref{mx}) for $f\in MX$ we obtain
$$\|M(f_N)\|_{X}\le 2\|m_{\la_0}(f_N)\|_{X}\le 2\|m_{\la_0}f\|_{X}.$$
Since $f_N\uparrow |f|$, we have $M(f_N)\uparrow Mf$ (the proof of this simple fact can be found in, e.g., \cite[Section 2]{L20}).
Therefore, letting $N\to \infty$ completes the proof of (\ref{mx}) for any $f\in X$ by the Fatou property of $X$.

Now, the implication $({\rm ii})\Rightarrow ({\rm i})$ follows immediately by setting $Mf$ instead of $f$ in (\ref{mx}) and applying Proposition \ref{pr2}.
This completes the proof.
\end{proof}

Suppose that $X:=L^p(w)$. Theorem \ref{eqst} shows that if $M$ is bounded on $ML^p(w)$, then $M$ is bounded on $L^p(w)$ (and hence $w\in A_p$) iff $m_{\la}$ does.
But the boundedness of $m_{\la}$ on $L^p(w)$ follows easily assuming just $w\in A_{\infty}$. Indeed, we have the following.

\begin{prop}\label{pr3} Let $w\in A_{\infty}$. Then the local maximal operator $m_{\la}$ is bounded on $L^p(w)$ for every $p>0$ and $\la\in (0,1)$.
\end{prop}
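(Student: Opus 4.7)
The plan is to reduce the claim to Muckenhoupt's theorem applied on an appropriate $L^q(w)$ space, using the pointwise estimate (\ref{mla}) already recorded in the proof of Proposition \ref{pr2}: for every $\d>0$,
$$m_\la f(x) \le \la^{-1/\d} M_\d f(x) = \la^{-1/\d} \big(M(|f|^\d)(x)\big)^{1/\d}.$$

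Given $w\in A_\infty$, I would first fix $s>1$ such that $w\in A_s$; this is possible by the definition $A_\infty = \bigcup_{s>1} A_s$. For the given $p>0$, I would then pick $\d>0$ small enough that $p/\d > s$. Since $A_s\subset A_{p/\d}$, Muckenhoupt's theorem provides a constant $C$ such that $\|Mg\|_{L^{p/\d}(w)} \le C\|g\|_{L^{p/\d}(w)}$ for every $g$. Applying this with $g=|f|^\d$ and inserting into the pointwise bound yields
$$\int_{\R^n} (m_\la f)^p w \le \la^{-p/\d} \int_{\R^n}\big(M(|f|^\d)\big)^{p/\d} w \le C^{p/\d}\la^{-p/\d}\int_{\R^n} |f|^p w,$$
which is exactly the desired $L^p(w)$ boundedness of $m_\la$.

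There is no serious obstacle here. The argument is essentially one line: Chebyshev's inequality (encoded in (\ref{mla})) combined with the fact that membership in $A_\infty$ opens up access to $L^q(w)$-boundedness of $M$ for arbitrarily large $q$. The only point worth emphasizing is that taking $\d$ small enough allows a single argument to cover the full range $p>0$, including the regime $p\le 1$, where $M$ itself need not be bounded on $L^p(w)$ but $M$ acting on $|f|^\d\in L^{p/\d}(w)$ still is.
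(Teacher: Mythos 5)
Your argument is correct and is essentially the paper's own proof: both rest on the pointwise bound (\ref{mla}) together with Muckenhoupt's theorem for $M$ on $L^{p/\d}(w)$, the only cosmetic difference being that the paper fixes $\d=p/r$ exactly (with $w\in A_r$) while you take $\d$ slightly smaller and invoke the inclusion $A_s\subset A_{p/\d}$. Nothing further is needed.
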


\begin{proof} There exists $r>1$ such that $w\in A_r$. Define $\d:=\frac{p}{r}$. Then applying (\ref{mla}) and using that $M$ is bounded on $L^r(w)$, we obtain
$$\|m_{\la}f\|_{L^p(w)}\le \frac{1}{\la^{1/\d}}\|M(|f|^{\d})\|_{L^r(w)}^{r/p}\le C\|f\|_{L^p(w)},$$
and the proof is complete.
\end{proof}

Thus our strategy in the proof of Theorem \ref{mr} is to deduce that $w\in A_{\infty}$. We will use the following equivalent definition of
$A_{\infty}$ (see, e.g., \cite[p. 527]{G14}): $w\in A_{\infty}$ if there exist
$C,\d>0$ such that for every cube $Q$ and an arbitrary subset $E\subset Q$,
$$\int_Ew\le C\Big(\frac{|E|}{|Q|}\Big)^{\d}\int_Qw.$$

\begin{proof}[Proof of Theorem \ref{mr}]
By Theorem \ref{eqst}, there exists $\la_0\in (0,1)$ such that
\begin{equation}\label{im}
\|Mf\|_{L^p(w)}\le 2\|m_{\la_0}f\|_{L^p(w)}.
\end{equation}

Our goal now is to show that $w\in A_{\infty}$. Then (\ref{im}) combined with Proposition \ref{pr3} would imply that $M$ is bounded on $L^p(w)$ and hence $w\in A_p$.

First we show that $w$ satisfies the doubling condition. It follows from (\ref{im}) that for every cube $Q$,
$$\Big(\frac{1}{|Q|}\int_Q|f|\Big)w(Q)^{1/p}\le 2\|m_{\la_0}(f\chi_Q)\|_{L^p(w)}.$$
Take here $f=\chi_{\e Q}$ with $\e>0$ small enough. We obtain
\begin{equation}\label{step}
w(Q)^{1/p}\le \frac{2}{\e^n}\|m_{\la_0}(\chi_{\e Q})\|_{L^p(w)}.
\end{equation}
Now we claim that if $\e^n<\la_0\big(\frac{1}{4}-\frac{\e}{2}\big)^n$, then $m_{\la_0}(\chi_{\e Q})\le \chi_{\frac{1}{2}Q}$.
Indeed, let $x\not\in \frac{1}{2}Q$ and let $R$ be an arbitrary cube such that $x\in R$ and $R\cap \e Q\not=\emptyset$. Then $|R|\ge \big(\frac{1}{4}-\frac{\e}{2}\big)^n|Q|$.
From this,
$$|R\cap \e Q|\le \e^n|Q|<\la_0|R|,$$
which implies
$$m_{\la_0}(\chi_{\e Q})(x)=\sup_{R\ni x}(\chi_{R\cap\e Q})^*(\la_0|R|)=0,\quad x\not\in \frac{1}{2}Q.$$
Thus, we obtain from (\ref{step}) that for suitably chosen $\e$,
$$w(Q)\le \Big(\frac{2}{\e^n}\Big)^pw(Q/2),$$
which proves the doubling condition.

Now, combining the $C_p$ condition with (\ref{im}) yields that there exist $C,\d>0$ such that for every cube $Q$ and any measurable subset $E\subset Q$,
\begin{equation}\label{alm}
w(E)\le C\Big(\frac{|E|}{|Q|}\Big)^{\d}\|M\chi_Q\|_{L^p(w)}^p\le 2^pC\Big(\frac{|E|}{|Q|}\Big)^{\d}\|m_{\la_0}(\chi_Q)\|_{L^p(w)}^p.
\end{equation}
Applying the same argument as above, we obtain that
$$m_{\la_0}(\chi_Q)\le \chi_{rQ}$$
for $r>1$ satisfying $\big(\frac{r-1}{2})^n=\frac{1}{\la_0}$. Therefore, by the doubling property,
$$\|m_{\la_0}(\chi_Q)\|_{L^p(w)}^p\le w(rQ)\le Cw(Q),$$
which along with (\ref{alm}) proves that $w\in A_{\infty}$, and hence the proof is complete.
\end{proof}

\begin{remark}\label{depc} It can be easily checked that the $A_p$ constant $[w]_{A_p}$ in Theorem \ref{mr} depends only on the $ML^p(w)$-norm of $M$ and on the constants from the definition of $C_p$.
Similarly, the $A_p$ constant in Corollary~\ref{fscor} depends only on the $ML^p(w)$ and $(ML^p(w))'$-norms of $M$.
\end{remark}

\section{Proof of Theorem \ref{scf}}
As we will see, Theorem \ref{scf} is a particular case of a more general result, Theorem \ref{msr}, proved below.
Let us start with some preliminary facts that will be needed in the proof of this result. The following definition was given in \cite{N24}.

\begin{definition}\label{nd}
We say that a (sub)linear operator $T$ is non-degenerate if there exists a constant $C>0$ such that
for all $\ell>0$ there is an $x_{\ell}\in {\mathbb R}^n$ such that for all cubes $Q$ with the side length $\ell_Q=\ell$, for all
non-negative and locally integrable $f$ and for all $x\in (Q+x_{\ell})\cup (Q-x_{\ell})$,
$$\frac{1}{|Q|}\int_Qf\le C|T(f\chi_Q)(x)|.$$
\end{definition}

\begin{remark}\label{std} Recall that a convolution singular integral operator $Tf:=f*K$ is non-degenerate in the sense of Stein \cite[p. 210]{S93}
if, additionally to the standard assumptions on $K$, there exists a constant $C>0$ and a unit vector $u_0$, so that
$$|K(tu_0)|\ge \frac{C}{|t|^n}\quad\text{for all}\,\,t\in {\mathbb R}.$$
For example, this condition holds if $T$ is any one of the Riesz transforms.
An argument given in \cite[p. 211]{S93} shows that if $T$ is non-degenerate in the sense of Stein, then it is also non-degenerate in the sense of Definition \ref{nd}.
\end{remark}

We say that a BFS $X$ is order-continuous if for every sequence $\{f_j\}$ in $X$ such that $f_j\downarrow 0$ almost everywhere we have
$\|f_j\|_{X}\downarrow 0$.
The following statement is an abridged version of \cite[Theorem A]{N24}.

\begin{theorem}[\cite{N24}]\label{N}
Let $X$ be an order-continuous BFS, and let $T$ be a non-degenerate (in the sense of Definition \ref{nd}) linear operator. If $T$ is bounded on $X$, then the maximal operator $M$ is bounded on $X$ and on $X'$.
\end{theorem}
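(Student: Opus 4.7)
The plan is to use the non-degeneracy of $T$ to convert the boundedness of $T$ on $X$ into pointwise and $X$-norm control of the Hardy--Littlewood maximal operator $M$. The starting observation is the immediate pointwise consequence of Definition~\ref{nd}: for every non-negative locally integrable $f$ and every cube $Q$ of side length $\ell$,
$$\langle f\rangle_Q\,\chi_{(Q+x_\ell)\cup(Q-x_\ell)}(x)\le C\,|T(f\chi_Q)(x)|\quad \text{for a.e. } x\in\R^n.$$
Combined with the boundedness of $T$ on $X$ and the lattice property, this yields the key \emph{reverse-averaging} inequality
$$\langle f\rangle_Q\,\|\chi_{Q+x_\ell}\|_X \le C\|T\|_{X\to X}\,\|f\chi_Q\|_X$$
for every cube $Q$, which is the device by which $T$-boundedness ``downgrades'' to averaging estimates.

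Step 1 (boundedness of $M$ on $X$). I would fix $f\in X$, $f\ge 0$, and for each $y\in\R^n$ select a cube $Q(y)\ni y$ almost realising $Mf(y)$. The reverse-averaging inequality applied to $Q(y)$ with the shifted evaluation point $y+x_{\ell_{Q(y)}}$ gives
$$Mf(y)\le C\,|T(f\chi_{Q(y)})(y+x_{\ell_{Q(y)}})|.$$
I would then linearise by performing a Calder\'on--Zygmund decomposition of the level set $\{Mf>\la\}$ into maximal dyadic cubes, apply the localised reverse-averaging estimate on each cube, and sum over dyadic heights $\la=2^k$, using the order-continuity of $X$ to justify convergence of the resulting series in $X$. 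This should yield $\|Mf\|_X\le C\|T\|_{X\to X}\|f\|_X$.

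Step 2 (boundedness of $M$ on $X'$). Because $X$ is order-continuous, the associate space $X'$ coincides with the Banach dual $X^{*}$, so the Banach adjoint $T^{*}$ is bounded on $X'$ with $\|T^{*}\|_{X'\to X'}=\|T\|_{X\to X}$. Using the duality identity $\int T(f\chi_Q)\,g=\int f\chi_Q\,T^{*}g$ together with the pointwise lower bound on $|T(f\chi_Q)|$ coming from non-degeneracy, I would derive a dual non-degeneracy for $T^{*}$: for $g\in X'$, $g\ge 0$, supported on $Q\pm x_\ell$, the average of $|T^{*}g|$ over $Q$ is bounded below by a constant times $\langle g\rangle_{Q\pm x_\ell}$. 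Rerunning Step~1 with $(X,T)$ replaced by $(X',T^{*})$ would then give the boundedness of $M$ on $X'$.

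The main obstacle is the transition in Step~1 from the localised reverse-averaging estimate to a global $X$-norm bound on $Mf$. The delicate point is that the inequality only controls the $X$-norm of $\langle f\rangle_Q$ times $\chi_{Q+x_\ell}$, not $\chi_Q$; to pass back to $Q$ itself one must either extract a translation-invariance of $\|\chi_Q\|_X$ from the boundedness of $T$ on $X$ (by testing the pointwise bound against $f=\chi_Q$ and symmetrising over $\pm x_\ell$) or perform a scale-by-scale covering argument in which order-continuity of $X$ is used crucially to sum over dyadic scales. A secondary subtle point lies in Step~2: since the non-degeneracy lower bound is on $|T(f\chi_Q)|$ rather than on $T(f\chi_Q)$, dualising it to $T^{*}$ requires careful sign-tracking or an auxiliary symmetrisation, and I expect this is where the bulk of the technical work in \cite{N24} is concentrated.
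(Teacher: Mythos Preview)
The paper does not prove Theorem~\ref{N}; it is quoted from \cite{N24}. However, Remark~\ref{r1} tells you the actual architecture of Nieraeth's argument, and it differs substantially from your plan. The route is: (a) non-degeneracy together with (weak-)$X$ boundedness of $T$ yields the abstract Muckenhoupt condition
\[
\sup_{Q}\frac{\|\chi_Q\|_{X}\,\|\chi_Q\|_{X'}}{|Q|}<\infty,
\]
and (b) this condition, which is \emph{symmetric in $X$ and $X'$}, is then shown (in \cite{N24}) to imply that $M$ is bounded on both $X$ and $X'$. Step (a) is exactly the computation reproduced in Remark~\ref{r1}: test the pointwise lower bound against $f=\chi_{Q'}$ to compare $\|\chi_Q\|_X$ and $\|\chi_{Q'}\|_X$, then dualise the averaging inequality to obtain the $X'$-norm factor. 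No adjoint $T^{*}$ and no dual non-degeneracy are needed; the symmetry is built into the Muckenhoupt condition itself, which is why your Step~2 is heading in an unnecessary and technically awkward direction.

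Your Step~1 has a genuine gap. From the reverse-averaging inequality you obtain, for each Calder\'on--Zygmund cube $Q_j$ at height $\lambda$, an estimate of the form $\lambda\,\|\chi_{Q_j\pm x_{\ell_j}}\|_X\le C\|f\chi_{Q_j}\|_X$. But in a general BFS there is no mechanism for summing such per-cube $X$-norm inequalities into a global bound on $\|Mf\|_X$: the space need not be a lattice of $\ell^p$ type, and order-continuity gives you dominated-convergence behaviour, not summability. The phrase ``sum over dyadic heights $\lambda=2^k$, using the order-continuity of $X$'' hides precisely the step that fails. Nieraeth's argument avoids this entirely by passing through the scalar condition above and then invoking machinery (Rubio de Francia type iteration in the BFS setting) that converts it into boundedness of $M$.
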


\begin{remark}\label{ndep}
It can be easily checked from the proof of Theorem \ref{N} that the $X$ and $X'$-norms of $M$ depend only on the $X$-norm of $T$.
\end{remark}

Now, our goal is to apply Theorem \ref{N} to $X:=ML^p(w)$. Therefore, it is important to check whether $ML^p(w)$ is order-continuous.
The definition of $ML^p(w)$ makes sense if $w\in N_p$ and $w>0$ on a set of positive measure, and so we assume that these two conditions hold.
Then there is a simple characterization of when $ML^p(w)$ is order-continuous.

\begin{lemma}\label{choc} The space $ML^p(w)$ is order-continuous iff $w\not\in L^1({\mathbb R}^n)$.
\end{lemma}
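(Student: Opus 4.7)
My plan is to prove both implications, with the converse ($w\notin L^1$) being the substantive part. For necessity, a single counterexample works: when $w\in L^1$, the sequence $f_j:=\chi_{\{|x|>j\}}$ decreases to $0$ a.e.\ and satisfies $Mf_j\equiv 1$ (taking arbitrarily large cubes containing $x$), so $f_1\in ML^p(w)$ because $\|Mf_1\|_{L^p(w)}=\|w\|_{L^1}^{1/p}<\infty$, yet $\|Mf_j\|_{L^p(w)}=\|w\|_{L^1}^{1/p}$ is a positive constant; hence $ML^p(w)$ fails to be order-continuous.

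For the converse, assume $w\notin L^1$ and pick any $f_j\in ML^p(w)$ with $f_j\downarrow 0$ a.e. Because $Mf_j$ decreases pointwise to some $g\leq Mf_1\in L^p(w)$, dominated convergence yields $\|Mf_j\|_{L^p(w)}\to\|g\|_{L^p(w)}$, so the goal reduces to proving $g=0$ a.e. Note that $f_1\in L^1_{\loc}$ (a standard consequence of $Mf_1\in L^p(w)$ combined with $w$ being positive on a set of positive measure, which yields $Mf_1(x_*)<\infty$ for some $x_*$). Lebesgue's differentiation theorem then applies to every $f_k$, so at a.e.\ $x_0$ all $f_k$ have $x_0$ as a Lebesgue point and $f_k(x_0)\to 0$. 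I fix such an $x_0$ and argue by contradiction that $g(x_0)$ cannot be $\geq c$ for any $c>0$.

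Assuming $g(x_0)\geq c$, for all sufficiently large $j$ there is a cube $Q_j\ni x_0$ with $\frac{1}{|Q_j|}\int_{Q_j}f_j>c/2$. The crux is to show $\ell(Q_j)\to\infty$. \emph{Lower bound}: choosing $k_0$ so that $f_{k_0}(x_0)<c/4$, Lebesgue differentiation at $x_0$ gives $\delta>0$ such that $\frac{1}{|Q|}\int_Q f_{k_0}<c/3$ whenever $Q\ni x_0$ has $\ell(Q)<\delta$; the monotonicity $f_j\leq f_{k_0}$ for $j\geq k_0$ then forces $\ell(Q_j)\geq\delta$. \emph{Upper bound}: if $\ell(Q_j)\leq L$ along a subsequence, all such $Q_j$ lie in a fixed bounded cube $R$, and dominated convergence (dominated by $f_1\chi_R\in L^1$) yields $\int_R f_j\to 0$, hence $\frac{1}{|Q_j|}\int_{Q_j}f_j\leq\delta^{-n}\int_R f_j\to 0$, contradicting the choice of $Q_j$. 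Thus $\ell(Q_j)\to\infty$.

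Once $\ell(Q_j)\to\infty$, the translation-at-infinity step is immediate: for any fixed $y\in\R^n$, the smallest axis-parallel cube $Q_j'$ containing $Q_j\cup\{y\}$ satisfies $\ell(Q_j')\leq\ell(Q_j)+|y-x_0|_\infty$, so
\[
Mf_j(y)\geq\frac{1}{|Q_j'|}\int_{Q_j'}f_j\geq\frac{|Q_j|}{|Q_j'|}\cdot\frac{c}{2}\geq\frac{c}{2}\Bigl(1+\frac{|y-x_0|_\infty}{\ell(Q_j)}\Bigr)^{-n}\longrightarrow\frac{c}{2}.
\]
Letting $j\to\infty$ produces $g(y)\geq c/2$ for \emph{every} $y\in\R^n$, so $\int g^p w\geq(c/2)^p\int w=\infty$ because $w\notin L^1$, contradicting $g\leq Mf_1\in L^p(w)$. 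The step I expect to be the main obstacle is the first half of the $\ell(Q_j)\to\infty$ argument: converting the pointwise convergence $f_k(x_0)\to 0$ and the ``for fixed $k$'' nature of Lebesgue differentiation into a lower bound on $\ell(Q_j)$ that is uniform in $j$; freezing $k_0$ and then exploiting $f_j\leq f_{k_0}$ for $j\geq k_0$ is the device that sidesteps this difficulty, after which the translation-at-infinity estimate and the failure of $w\in L^1$ do the rest.
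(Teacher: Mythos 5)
Your proof is correct, but the sufficiency direction takes a genuinely different route from the paper's. (The necessity direction, via $f_j=\chi_{\{|x|\ge j\}}$, is identical.) The paper handles $w\notin L^1$ by splitting $f_j$ into $f_j\chi_{B_R}+f_j\chi_{\mathbb{R}^n\setminus B_R}$: the tail is controlled uniformly in $j$ by citing Stein's pointwise fact that $M(f\chi_{\mathbb{R}^n\setminus B_R})\to 0$ as $R\to\infty$ whenever $f\in ML^p(w)$ and $w\notin L^1$ (Proposition \ref{prmo}), and the compactly supported part is handled via the weak $(1,1)$ property of $M$ (convergence in measure, then a.e.\ by monotonicity) plus dominated convergence. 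You instead argue entirely pointwise: reduce to showing that the decreasing limit $g$ of $Mf_j$ vanishes a.e., and at a common Lebesgue point $x_0$ with $f_k(x_0)\to 0$ show that any cubes $Q_j\ni x_0$ witnessing $g(x_0)\ge c$ must satisfy $\ell(Q_j)\to\infty$ (lower bound by freezing $k_0$ and using $f_j\le f_{k_0}$, upper bound by local dominated convergence using $f_1\in L^1_{\loc}$, which you correctly extract from $Mf_1\in L^p(w)$ and the standing assumption that $w>0$ on a set of positive measure); the enlarged-cube estimate then forces $g\ge c/2$ everywhere, incompatible with $g\le Mf_1\in L^p(w)$ and $\int w=\infty$. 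In effect you re-prove, rather than quote, the mechanism behind Stein's Proposition \ref{prmo}: mass escaping to large cubes would make $Mf_j$ bounded below globally, which $w\notin L^1$ forbids. What each approach buys: the paper's proof is shorter and delegates the key pointwise decay to a cited fact, at the cost of invoking the weak $(1,1)$ inequality and a subsequence argument; yours is self-contained and purely elementary (Lebesgue differentiation and dominated convergence only), at the cost of being longer and of having to manage the uniformity in $j$, which you handle correctly by fixing $k_0$ and exploiting monotonicity.
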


The proof is based on the following simple property of the maximal operator which can be found in \cite[p. 222]{S93}.

\begin{prop}\label{prmo} Suppose that $w\not\in L^1({\mathbb R}^n)$ and $f\in ML^p(w)$. Then, for all $x\in {\mathbb R}^n$,
$$\lim_{R\to \infty}M(f\chi_{{\mathbb R}^n\setminus B_R})(x)=0,$$
where $B_R$ denotes the ball of radius $R$ centered at the origin.
\end{prop}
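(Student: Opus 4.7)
The plan is to prove the pointwise vanishing by a direct averaging argument: only large cubes contribute to the truncated maximal function, and averages of $|f|$ over large cubes containing $x$ must be small because $Mf\in L^{p}(w)$ while $w$ has infinite total mass.

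Fix $x\in{\mathbb R}^{n}$ and take $R>|x|$. A cube $Q\ni x$ contributes nontrivially to $M(f\chi_{{\mathbb R}^{n}\setminus B_{R}})(x)$ only when $Q\cap({\mathbb R}^{n}\setminus B_{R})\neq\emptyset$, since otherwise the integrand vanishes on $Q$. For such a cube, $x\in Q$ and some $y\in Q$ has $|y|>R$, so $\mathrm{diam}(Q)\ge|y-x|\ge R-|x|$, which forces $\ell_{Q}\ge(R-|x|)/\sqrt{n}$. Hence
$$M(f\chi_{{\mathbb R}^{n}\setminus B_{R}})(x)\le\sup_{\substack{Q\ni x\\ \ell_{Q}\ge(R-|x|)/\sqrt{n}}}\frac{1}{|Q|}\int_{Q}|f|,$$
and it suffices to show that this supremum tends to $0$ as $R\to\infty$. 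For any $Q\ni x$, let $Q^{*}$ be the cube centered at $x$ with side length $2\ell_{Q}$; then $Q\subset Q^{*}$ and $|Q^{*}|=2^{n}|Q|$, so $\frac{1}{|Q|}\int_{Q}|f|\le 2^{n}\cdot\frac{1}{|Q^{*}|}\int_{Q^{*}}|f|$.

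For every $y\in Q^{*}$ the defining supremum of $Mf(y)$ includes the cube $Q^{*}$, hence $Mf(y)\ge\frac{1}{|Q^{*}|}\int_{Q^{*}}|f|$ on $Q^{*}$. Raising to the $p$-th power and integrating against $w$ over $Q^{*}$ yields
$$\left(\frac{1}{|Q^{*}|}\int_{Q^{*}}|f|\right)^{p}w(Q^{*})\le\int_{Q^{*}}(Mf)^{p}w\le\|Mf\|_{L^{p}(w)}^{p},$$
so $\frac{1}{|Q^{*}|}\int_{Q^{*}}|f|\le\|Mf\|_{L^{p}(w)}/w(Q^{*})^{1/p}$. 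Since $\ell_{Q}\ge(R-|x|)/\sqrt{n}$, the cube $Q^{*}$ contains the ball $B(x,(R-|x|)/\sqrt{n})$, so $w(Q^{*})\ge w(B(x,(R-|x|)/\sqrt{n}))$, which tends to $\int_{{\mathbb R}^{n}}w=\infty$ by monotone convergence combined with $w\notin L^{1}({\mathbb R}^{n})$. Thus the bound on the average over $Q^{*}$ tends to $0$ uniformly in the relevant cubes, and the reduction from the first step gives $M(f\chi_{{\mathbb R}^{n}\setminus B_{R}})(x)\to 0$.

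There is no serious obstacle; the only conceptual point is that the two hypotheses $Mf\in L^{p}(w)$ and $w\notin L^{1}$ must be coupled through the pointwise lower bound $Mf\ge$ (average over $Q^{*}$), which converts finiteness of $\|Mf\|_{L^{p}(w)}$ into decay of the averages of $|f|$ over enclosing cubes whose $w$-measure blows up.
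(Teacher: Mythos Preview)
Your argument is correct. The only step worth reading twice is the uniformity of the bound in $Q$: you correctly observe that $w(Q^{*})\ge w\bigl(B(x,(R-|x|)/\sqrt{n})\bigr)$, which depends only on $R$ and $x$, so the estimate $\frac{1}{|Q|}\int_{Q}|f|\le 2^{n}\|Mf\|_{L^{p}(w)}\,w(Q^{*})^{-1/p}$ is indeed uniform over the admissible cubes.

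As for comparison with the paper: the paper does not actually prove this proposition. It states the result and attributes it to Stein's book \cite[p.~222]{S93}, where the same idea appears. Your proof is essentially the standard one---the coupling of the hypotheses through the pointwise lower bound $Mf(y)\ge \langle |f|\rangle_{Q^{*}}$ on $Q^{*}$, together with $w(Q^{*})\to\infty$, is exactly the mechanism used in Stein. So there is no genuine difference in approach; you have supplied the details the paper omits.
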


\begin{proof}[Proof of Lemma \ref{choc}]
Suppose that $ML^p(w)$ is order-continuous. Then, assuming that $w\in L^1({\mathbb R}^n)$, we arrive at a contradiction by taking the sequence
$f_j:=\chi_{\{|x|\ge j\}}, j\in {\mathbb N}$. Indeed, $f_j\in ML^p(w)$ and $f_j\downarrow 0$ everywhere. However, $Mf_j\equiv 1$
and therefore $\|f_j\|_{ML^p(w)}=\|w\|_{L^1}^{1/p}\ndownarrow 0$.

Suppose now that $w\not\in L^1({\mathbb R}^n)$. Take an arbitrary sequence $\{f_j\}$ in $ML^p(w)$ such that $f_j\downarrow 0$ almost everywhere.
Let $\e>0$. By Proposition~\ref{prmo} (applied to $f_1$) and by the dominated convergence theorem, there exists $R>0$ such that
$$\|f_j\|_{ML^p(w)}\le \|f_j\chi_{B_R}\|_{ML^p(w)}+\e,\quad j\in {\mathbb N}.$$

Consider the sequence $\{M(f_j\chi_{B_R})\}$. By the weak type $(1,1)$ of $M$, this sequence converges to zero in measure, and hence there exists a subsequence
which converges to zero almost everywhere. Since the sequence itself is monotonic decreasing, it also converges to zero almost everywhere. It remains to apply
the dominated convergence theorem, and we obtain that there exists $N\in {\mathbb N}$ such that for all $j>N$,
$$\|f_j\chi_{B_R}\|_{ML^p(w)}<\e,$$
which, combined with the previous estimate, shows that $\|f_j\|_{ML^p(w)}\downarrow~0$. This completes the proof.
\end{proof}

We are now ready to state the main result of this section.

\begin{theorem}\label{msr} Let $p>1$. Next, let $T$ be a linear non-degenerate operator in the sense of Definition \ref{nd}, and assume additionally that $T$ is bounded on $L^p$.
If $T$ is bounded on $ML^p(w)$, then $w\in A_p$.
\end{theorem}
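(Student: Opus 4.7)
The strategy is to combine Nieraeth's Theorem~\ref{N} with Corollary~\ref{fscor}. Applying Corollary~\ref{fscor} to $X:=ML^p(w)$ will yield $w\in A_p$ provided we know that $M$ is bounded on both $ML^p(w)$ and $(ML^p(w))'$; both boundedness statements can be read off from Theorem~\ref{N} applied to this $X$. For Theorem~\ref{N} to apply, $X$ must be an order-continuous BFS on which $T$ is bounded. The BFS property is ensured by $w\in N_p$ (so that $\frac{1}{1+|x|^n}\in X$), and the boundedness of $T$ on $X$ is precisely the standing hypothesis.

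The one non-automatic prerequisite is order-continuity. By Lemma~\ref{choc}, $ML^p(w)$ is order-continuous iff $w\notin L^1({\mathbb R}^n)$. When $w\notin L^1$, the plan runs directly: Theorem~\ref{N} together with Remark~\ref{ndep} produces the boundedness of $M$ on $X$ and on $X'$ with constants controlled by the $ML^p(w)$-norm of $T$, and then Corollary~\ref{fscor} (with Remark~\ref{depc}) delivers $w\in A_p$.

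The main obstacle is the case $w\in L^1({\mathbb R}^n)$, where order-continuity fails. My plan here is to reduce to the previous case by a perturbation: set $w_\varepsilon:=w+\varepsilon$ for $\varepsilon>0$. Then $w_\varepsilon\notin L^1$, while $w_\varepsilon\in N_p$ because $\int(1+|x|)^{-np}\,dx<\infty$ for $p>1$; hence $ML^p(w_\varepsilon)$ is an order-continuous BFS. The key quantitative step is to verify that $T$ is bounded on $ML^p(w_\varepsilon)$ with norm independent of $\varepsilon$. Using $ML^p(w_\varepsilon)\subseteq ML^p(w)$ (since $w\le w_\varepsilon$ implies $\|f\|_{ML^p(w)}\le\|f\|_{ML^p(w_\varepsilon)}$) and splitting
$$\|MTf\|_{L^p(w_\varepsilon)}^p=\|MTf\|_{L^p(w)}^p+\varepsilon\|MTf\|_{L^p}^p,$$
the first summand is controlled by the $ML^p(w)$-boundedness of $T$, while the second is handled by the $L^p$-boundedness of $T$ and of $M$ combined with the elementary bound $\varepsilon\|Mf\|_{L^p}^p\le\|Mf\|_{L^p(w_\varepsilon)}^p$; the additional hypothesis that $T$ is bounded on $L^p$ enters exactly at this point. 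Applying the already-established $w\notin L^1$ case to each $w_\varepsilon$ then gives $[w_\varepsilon]_{A_p}\le K$ uniformly in $\varepsilon$, and letting $\varepsilon\to 0^+$ (using monotone convergence for both $\int_Qw_\varepsilon$ and $\int_Qw_\varepsilon^{-1/(p-1)}$) yields $[w]_{A_p}\le K$. The most delicate point is the bookkeeping that keeps all operator norms uniform in $\varepsilon$ through the chain Theorem~\ref{N} $\to$ Corollary~\ref{fscor} $\to$ $A_p$-constant, so that the limit argument actually produces a genuine $A_p$ bound for $w$.
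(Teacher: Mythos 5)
Your proposal is correct and follows essentially the same route as the paper: perturb to $w_\e=w+\e$, use the $L^p$-boundedness of $T$ and $M$ to get a uniform-in-$\e$ bound for $T$ on $ML^p(w_\e)$, then apply Lemma~\ref{choc}, Theorem~\ref{N} (with Remark~\ref{ndep}) and Corollary~\ref{fscor} (with Remark~\ref{depc}) to get $[w_\e]_{A_p}$ bounded uniformly, and pass to the limit by monotone convergence. The only cosmetic difference is that you treat the case $w\notin L^1$ separately, whereas the paper runs the perturbation argument for all $w$ at once; your explicit splitting of $\|MTf\|_{L^p(w_\e)}^p$ just fills in a step the paper states without detail.
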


\begin{remark}\label{fol} If $T$ is a non-degenerate (in the sense of Stein) singular integral operator, then, by Remark \ref{std}, it is non-degenerate in the sense of Definition \ref{nd}.
Moreover, it is bounded on $L^p$. Hence, Theorem \ref{msr} contains Theorem \ref{scf} as a particular case.
\end{remark}

\begin{proof}[Proof of Theorem \ref{msr}]
For $\e\in (0,\e_0)$, where $\e_0$ is any fixed small number, define $w_{\e}:=w+\e$.
Then, using that $M$ and $T$ are bounded on $L^p$, we obtain that $T$ is bounded on $ML^p(w_{\e})$ with the $ML^p(w_{\e})$-norm independent of $\e$.
Since $w_{\e}\not\in L^1$, applying Lemma \ref{choc} and Theorem \ref{N} (along with Remark \ref{ndep}), we obtain that $M$ is bounded on $ML^p(w_{\e})$
and on $(ML^p(w_{\e}))'$ with the corresponding norms independent of $\e$. Therefore, by Corollary \ref{fscor} (along with Remark \ref{depc}), $w_{\e}\in A_p$
with the $A_{p}$ constant $[w_{\e}]_{A_p}$ independent of $\e$. Hence, by the monotone convergence theorem, $w\in A_p$.
\end{proof}

We conclude by making a couple of remarks related to Theorem \ref{msr}.

\begin{remark}\label{rpv}
For some singular integral operators Theorem \ref{msr} can be proved differently, without introducing $w_{\e}$ and the use of Remarks \ref{ndep}, \ref{depc}.
For example, for the Hilbert transform $H$, the boundedness of $H$ on $ML^p(w)$
implies the Coifman--Fefferman inequality for $H$, which in turn implies that $w\in C_p$ (this was shown in \cite{M80}). But any $C_p$ weight is not integrable (see, e.g., \cite{CLRT21}), and hence it remains
to apply Theorem~\ref{N} along with Corollary \ref{fscor}.

However, for more general singular integrals such an approach leads to additional technicalities. For example, the $C_p$ condition was deduced in \cite{S83} assuming that the Coifman--Fefferman inequality
holds for {\it all} Riesz transforms $R_j, j=1,\dots,n$. On the other hand, in Theorem \ref{msr}, $T$ can be any {\it one} of the Riesz transforms.
\end{remark}

\begin{remark}\label{r1} A non-degeneracy assumption on $T$ is used in the proof of Theorem \ref{N} through the following property: if $T$ is non-degenerate in the sense of Definition \ref{nd} and weak-$X$ bounded, then
\begin{equation}\label{am}
\sup_{Q}\frac{\|\chi_Q\|_{X}\|\chi_Q\|_{X'}}{|Q|}<\infty,
\end{equation}
where weak-$X$ boundedness means that
$$\sup_{\a>0}\a\|\chi_{\{|Tf|>\a\}}\|_{X}\le C\|f\|_X.$$

At this point, observe that the notion of a non-degenerate operator can be defined in a slightly more general way, which seems more flexible and applicable to a wider class of operators
(even though we do not give concrete examples).

\begin{definition}\label{annd}
We say that $T$ is non-degenerate if there exists a constant $C>0$ such that for every cube $Q$ and any non-negative
locally integrable $f$, one can find a cube $Q'$ such that
\begin{equation}\label{cond1}
\frac{1}{|Q|}\int_Qf\le C|T(f\chi_Q)(x)|\quad\text{for all}\,\,x\in Q'
\end{equation}
and
\begin{equation}\label{cond2}
1\le C|T(\chi_{Q'})(x)|\quad\text{for all}\,\,x\in Q.
\end{equation}
\end{definition}

It is easy to see that if $T$ is non-degenerate in the sense of Definition~\ref{nd}, then it is also non-degenerate in the sense of Definition \ref{annd}.

Next, assuming Definition \ref{annd} and weak-$X$ boundedness, we easily obtain (\ref{am}). Indeed, it follows from (\ref{cond1}) that
$$\Big(\frac{1}{|Q|}\int_Qf\Big)\|\chi_{Q'}\|_{X}\le C\|T\|_{X\to X_{weak}}\|f\chi_Q\|_{X},$$
while (\ref{cond2}) implies
$$\|\chi_{Q}\|_{X}\le C\|T\|_{X\to X_{weak}}\|\chi_{Q'}\|_{X}.$$
Combining both estimates yields
$$\Big(\frac{1}{|Q|}\int_Qf\Big)\|\chi_{Q}\|_{X}\le C^2\|T\|_{X\to X_{weak}}^2\|f\chi_Q\|_{X},$$
which is equivalent to (\ref{am}).

Therefore, Theorems \ref{N} and \ref{msr} can be formulated assuming that $T$ is non-degenerate in the sense of Definition \ref{annd}.
\end{remark}

\end{document}